\newcommand{\ben}{\begin{enumerate}}
\newcommand{\een}{\end{enumerate}}
\newcommand{\ble}{\begin{lem}}
\newcommand{\ele}{\end{lem}}
\newcommand{\bth}{\begin{thm}}
\renewcommand{\eth}{\end{thm}}
\newcommand{\bpr}{\begin{prop}}
\newcommand{\epr}{\end{prop}}
\newcommand{\bco}{\begin{cor}}
\newcommand{\eco}{\end{cor}}
\newcommand{\bcon}{\begin{conj}}
\newcommand{\econ}{\end{conj}}
\newcommand{\bde}{\begin{defn}}
\newcommand{\ede}{\end{defn}}
\newcommand{\bex}{\begin{exa}}
\newcommand{\eex}{\end{exa}}
\newcommand{\barr}{\begin{array}}
\newcommand{\earr}{\end{array}}
\newcommand{\btab}{\begin{tabular}}
\newcommand{\etab}{\end{tabular}}
\newcommand{\beq}{\begin{equation}}
\newcommand{\eeq}{\end{equation}}
\newcommand{\bea}{\begin{eqnarray*}}
\newcommand{\eea}{\end{eqnarray*}}
\newcommand{\bal}{\begin{align*}}
\newcommand{\bce}{\begin{center}}
\newcommand{\ece}{\end{center}}
\newcommand{\bpi}{\begin{picture}}
\newcommand{\epi}{\end{picture}}
\newcommand{\bpp}{\begin{picture}}
\newcommand{\epp}{\end{picture}}
\newcommand{\bfi}{\begin{figure} \begin{center}}
\newcommand{\efi}{\end{center} \end{figure}}
\newcommand{\bprf}{\begin{proof}}
\newcommand{\eprf}{\end{proof}\medskip}
\newcommand{\bsl}{\begin{slide}{}}
\newcommand{\esl}{\end{slide}}
\newcommand{\bfr}{\begin{frame}}
\newcommand{\efr}{\end{frame}}
\newcommand{\hqed}{\hfill \qed}
\newcommand{\eqed}[1]{$\textcolor{white}{\qed}\hfill{\dil#1}\hfill\qed$}
\newcommand{\hso}[1]{\hspace{-1pt}}
\newcommand{\sbe}{\subseteq}
\newcommand{\ptn}{\vdash}
\newcommand{\fl}[1]{\lfloor #1 \rfloor}
\newcommand{\ce}[1]{\lceil #1 \rceil}
\def\<{\langle}
\def\>{\rangle}
\newcommand{\ree}[1]{(\ref{#1})}
\newcommand{\ra}{\rightarrow}
\newcommand{\la}{\lambda}
\newcommand{\si}{\sigma}
\newcommand{\fS}{{\mathfrak S}}
\DeclareMathOperator{\inv}{inv}
\DeclareMathOperator{\maj}{maj}
\newcommand{\dil}{\displaystyle}
\newtheorem{thm}{Theorem}[section]
\newtheorem{prop}[thm]{Proposition}
\newtheorem{cor}[thm]{Corollary}
\newtheorem{lem}[thm]{Lemma}
\newtheorem{conj}[thm]{Conjecture}
\newtheorem{exa}[thm]{Example}
\DeclareMathOperator{\lb}{lb}
\DeclareMathOperator{\ls}{ls}
\DeclareMathOperator{\rb}{rb}
\DeclareMathOperator{\rs}{rs}
\DeclareMathOperator{\LB}{LB}
\DeclareMathOperator{\LS}{LS}
\DeclareMathOperator{\RB}{RB}
\DeclareMathOperator{\RS}{RS}
\begin{document}
\pagestyle{plain}

\title{Set partition patterns and statistics
}
\author{Samantha Dahlberg\\[-5pt]
\small Department of Mathematics, Michigan State University,\\[-5pt]
\small East Lansing, MI 48824-1027, USA, {\tt dahlbe14@msu.edu}\\
Robert Dorward\\[-5pt]
\small Department of Mathematics, Oberlin College,\\[-5pt]
\small Oberlin, OH 44074, USA, {\tt rdorward@oberlin.edu}\\
Jonathan Gerhard\\[-5pt]
\small Department of Mathematics, James Madison University\\[-5pt]
\small Harrisonburg, VA 22801, USA, {\tt gerha2jm@dukes.jmu.edu} \\
Thomas Grubb\\[-5pt]
\small Department of Mathematics, Michigan State University,\\[-5pt]
\small East Lansing, MI 48824-1027, USA, {\tt grubbtho@msu.edu}\\
Carlin Purcell\\[-5pt]
\small Vassar College, 124 Raymond Ave. Box 1114, \\[-5pt]
\small Poughkeepsie, NY 12604 USA, {\tt carlinpurcell@gmail.com}\\
Lindsey Reppuhn\\[-5pt]
\small Department of Mathematics, Kalamazoo College\\[-5pt]
\small Kalamazoo, MI 49006, USA, {\tt Lindsey.Reppuhn11@gmail.com} \\
Bruce E. Sagan\\[-5pt]
\small Department of Mathematics, Michigan State University,\\[-5pt]
\small East Lansing, MI 48824-1027, USA, {\tt sagan@math.msu.edu}
}

\date{\today\\[10pt]
	\begin{flushleft}
	\small Key Words: avoidance,  Fibonacci number, generating function, integer partition, $\lb$, $\ls$, pattern,  $\rb$, $\rs$, set partition, statistic
	                                       \\[5pt]
	\small AMS subject classification (2010):  05A05, 05A15,  05A18, 05A19
	\end{flushleft}}

\maketitle

\begin{abstract}
A set partition $\si$ of $[n]=\{1,\dots,n\}$ contains another set partition $\pi$ if restricting $\si$ to some $S\sbe[n]$ and then standardizing the result gives $\pi$.  Otherwise we say $\si$ avoids $\pi$.   For all sets of patterns consisting of partitions of  $[3]$, the sizes of the avoidance classes were determined by Sagan and by Goyt.   Set partitions are in bijection with restricted growth functions (RGFs) for which Wachs and White defined four fundamental statistics.  We consider the distributions of these statistics over various avoidance classes, thus obtaining multivariate analogues of the previously cited cardinality results.  This is the first in-depth study of such distributions.  We end with a list of open problems.  

%Interesting connections arise with integer partitions and other combinatorial objects.
\end{abstract}

%%%%%%%%%%%%%%%%%%%%%%%%%%%%%%%%
%
% 	Introduction
%
%%%%%%%%%%%%%%%%%%%%%%%%%%%%%%%%

\section{Introduction}

There has been an explosion of papers recently dealing with pattern containment and avoidance in various combinatorial structures.  And the study of statistics on combinatorial objects has a long and venerable history.  By comparison, there are relatively few papers which study a variety of statistics on a number of different avoidance classes.   The focus of the present work is pattern avoidance in set partitions combined with four important statistics defined by Wachs and White~\cite{ww:pqs}.  It is the first comprehensive study of these statistics on avoidance classes.  In particular, we consider the distribution of these statistics over every class avoiding a set of partitions of $\{1,2,3\}$.   
We will start by providing the necessary definitions and setting notation.

A {\em set partition} of a set $S$ is a collection $\si$ of nonempty subsets whose disjoint union is $S$.  We write $\si=B_1/\dots/B_k\ptn S$ where the subsets $B_i$ are called {\em blocks}.  When no confusion will result, we often drop the curly braces and commas in the $B_i$.
For $[n]=\{1,\dots,n\}$, we use the notation
$$
\Pi_n=\{\si\ :\ \si\ptn[n]\}.
$$

To define pattern avoidance in this setting, suppose $\si=B_1/\dots/B_k\in\Pi_n$ and $S\sbe[n]$.   Then $\si$ has a corresponding {\em subpartition} $\si'$ whose blocks are the nonempty intersections $B_i\cap S$.  For example, if $\si=14/236/5\ptn[6]$ and $S=\{2,4,6\}$ then $\si'=26/4$.  We {\em standardize} a set partition with integral elements by replacing the smallest element by $1$, the next smallest by $2$, and so forth.  So the standardization of $\si'$ above is $13/2$.   Given two set partitions $\si$ and $\pi$, we say that $\si$ {\em contains $\pi$ as a pattern} if there is a subpartition of $\si$ which standardizes to $\pi$.  Otherwise we say that $\si$ {\em avoids} $\pi$.  Continuing our example, we have already shown that $\si=14/236/5$ contains $13/2$.  But $\si$ avoids $123/4$ because the only block of $\si$ containing three elements also contains the largest element in $\si$, so there can be no larger element in a separate block.
We let
$$
\Pi_n(\pi)=\{\si\in\Pi_n\ :\ \text{$\si$ avoids $\pi$}\}.
$$

In order to connect set partitions with the statistics of Wachs and White, we will have to convert them into restricted growth functions.  A {\em restricted growth function} (RGF) is a sequence $w=a_1\dots a_n$ of positive integers subject to the restrictions
\ben
\item  $a_1=1$, and
\item  for $i\ge2$ we have
\beq
\label{RGF}
a_i\le 1+\max\{a_1,\dots,a_{i-1}\}.
\eeq
\een
The number of elements in $w$ is called its {\em length} and we  let
$$
 R_n=\{w\ :\ \text{$w$ is an RGF of length $n$}\}.
$$
There is a simple bijection $\Pi_n\ra R_n$.  We say $\si=B_1/\dots/B_k\in\Pi_n$ is in {\em standard form} if $\min B_1<\dots<\min B_k$.  Note that this forces $\min B_1=1$.  We henceforth assume all partitions in $\Pi_n$ are written in standard form.  Associate with $\si$ the word $w(\si)=a_1\dots a_n$ where
$$
\text{$a_i = j$ if and only if $i\in B_j$.} 
$$
Using the example from the previous paragraph $w(\si)=122132$.  It is easy to see that $w(\si)$ is a restricted growth function and that the map $\si\mapsto w(\si)$ is the desired bijection.   It will be useful to have a notation for the RGFs of partitions avoiding a given pattern $\pi$, namely
$$
 R_n(\pi)=\{w(\si)\ :\ \si\in\Pi_n(\pi)\}.
$$ 

Sagan~\cite{sag:pas} described the set partitions in $\Pi_n(\pi)$ for each $\pi\in\Pi_3$.  Although it is not difficult to translate his  work into the language of restricted growth functions, we include the proof of the following  result for completeness and since it will be used many times subsequently.  Define the {\em initial run} of an RGF $w$ to be the longest prefix of the form $12\dots m$.  Also, we will use the notation $a^l$ to indicate a string of $l$ consecutive copies of the letter $a$ in a word.
Finally, say that $w$ is {\em layered} if $w=1^{n_1} 2^{n_2}\dots m^{n_m}$ for positive integers $n_1,n_2,\dots,n_m$.
\bth[\cite{sag:pas}]
\label{avoidance}
We have the following characterizations.
\ben
\item $ R_n(1/2/3) = \{w\in R_n\ :\ \text{$w$ consists of only $1$s and $2$s}\}$.
\item  $ R_n(1/23)=\left\{w\in R_n\ :\ \text{$w$ is obtained by inserting a single $1$ into a word}\right.$ 

\hspace{160pt}$\left.\text{of the form $1^l 23\dots m$ for some $l\ge0$ and $m\ge1$}\right\}.$
\item   $ R_n(13/2)=\{w\in R_n\ :\ \text{$w$ is layered}\}$.
\item $ R_n(12/3)=\{w\in R_n\ :\ \text{$w$ has initial run $1\dots m$ and $a_{m+1}=\dots=a_n\le m$}\}$.
\item $ R_n(123)=\{w\in R_n\ :\ \text{$w$ has no element repeated more than twice}\}$.
\een
\eth
\begin{proof}  In all cases it is easy to see that $ R_n(\pi)$ is contained in the right-hand side.  So we will concentrate on proving the other inclusions.

1.  If $w(\si)$ can have only $1$s and $2$s then $\si$ has at most two blocks and so avoids $1/2/3$.

2.   Suppose $\si\in\Pi_n(1/23)$ and let $m=\max\si$.  We assume $m>1$ since otherwise $w$ is clearly of the desired form.  Then no number $1<a\le m$ can be repeated since, if it were, the initial $1$ and two $a$'s in $w$ would correspond to a copy of $1/23$ in $\si$.  Similarly, there can not be two $1$'s in $w$ appearing after the $2$.  These two restrictions are equivalent to the description in the theorem.

3.  It suffices to show that if $a_i=r$ then $a_{i+1}=r$ or $r+1$ whenever $\si$ avoids $13/2$.  If not, then  $a_{i+1}=s$ where $s<r$ or $s>r+1$.  In the former case, $s$ must appear in $w$ in a position to the left of $a_i$ and the two copies of $s$ together with the copy of $r$ form a $13/2$ in $\si$.  If $s>r+1$, then $a_i$ can not be the first copy of $r$ in $w$.   Now these two copies of $r$ together with the $s$ give another contradiction.

4.   Suppose $\si$ avoids $12/3$ and consider $a_{m+1}$.  Condition~\ree{RGF} implies that $a_{m+1}=a_i\le m$ for some $i\le m$.  So if $a_{m+2}\neq a_{m+1}$, then the subpartition $i,m+1/m+2$ would be a copy of $12/3$ in $\si$ which is a contradiction.  Continuing in this way we obtain $a_{m+1}=\dots=a_n$.

5.  The given condition on $w$ implies that the blocks of $\si$ all have one or two elements.  Thus $\si$ avoids $123$.
\end{proof}

Using these characterizations, it is a simple matter to find the cardinalities of the avoidance classes.  
\bco[\cite{sag:pas}]
\label{avoidancecor}
We have the following cardinalities.
$$
\barr{l}
\#\Pi_n(1/2/3)=\#\Pi_n(13/2)=2^{n-1},\\[10pt]
\#\Pi_n(1/23)=\#\Pi_n(12/3)=\dil 1+\binom{n}{2},\\[10pt]
\#\Pi_n(123)=\dil\sum_{k\ge 0} {n\choose 2k} (2k)!!
\earr
$$
where $(2k)!!=(1)(3)(5)\dots(2k-1)$.\hqed
\eco

The four statistics of Wachs and White are denoted $\lb$, $\ls$, $\rb$, and $\rs$ where ``l" stands for ``left," ``r" stands for ``right," ``b" stands for ``bigger," and ``s" stands for ``smaller."  We will describe the left-bigger statistic and the other three should become clear by analogy.   Given a word $w=a_1\dots a_n$ we define
$$
\lb(a_j)=\#\{a_i\ :\ \text{$i<j$ and $a_i>a_j$}\}.
$$ 
In words, we count the set  of integers occuring before $a_j$ and bigger than $a_j$.  It is important to note that we are taking the cardinality of a set, so if there are multiple copies of such an integer then it is only counted once.  Also, clearly $\lb(a_j)$ depends on the word containing $a_j$, not just $a_j$ itself even though, for simplicity, our notation does not reflect that.   By way of example, if $w=1223142$ then
$\lb(a_5)=2$ since there is a $2$ and a $3$ to the left of $a_5=1$.  Finally, define
$$
\lb(w)=\lb(a_1)+\dots+\lb(a_n).
$$
Continuing our example,
$$
\lb(1223142)=0+0+0+0+2+0+2=4.
$$
To simplify notation, we will write $\lb(\si)$ for the more cumbersome $\lb(w(\si))$.  Our main objects of study will be the generating functions
$$
\LB_n(\pi)=\LB_n(\pi;q)=\sum_{\si\in\Pi_n(\pi)} q^{\lb(\si)}
$$
and the three analogous polynomials for the other statistics.  Often, we will even be able to compute the multivariate generating function
$$
F_n(\pi)=F_n(\pi;q,r,s,t)=\sum_{\si\in\Pi_n(\pi)} q^{\lb(\si)} r^{\ls(\si)} s^{\rb(\si)} t^{\rs(\si)}.
$$

The rest of this article is structured as follows.  In the next four sections we will compute $F_n(\pi)$ for $\pi=1/2/3,\ 1/23,\ 13/2$, and $12/3$, respectively.  In Section~\ref{sec123} we study $123$ which is a more difficult pattern to analyze.
One can also consider avoiding more than one pattern at once and this is the goal of Section~\ref{secmpa}.  Various results which did not fit earlier into the paper will be found in Section~\ref{secmr}.  Finally we end with some open problems and areas for future research.

%%%%%%%%%%%%%%%%%%%%%%%%%%%%%%%%
%
% 	1/2/3
%
%%%%%%%%%%%%%%%%%%%%%%%%%%%%%%%%

\section{The pattern $1/2/3$}
\label{sec1/2/3}

We first consider the set partition $1/2/3$. We begin by presenting the four-variable generating function from which we derive the generating functions associated with each individual statistic.

\bth 
We have
$$F_n(1/2/3) = 1 + \sum \limits_{l=1}^{n-1}r^{n-l}s^l + \sum \limits_{l=2}^{n-1}\sum \limits_{k=0}^{n-l-1}\sum \limits_{i, j \geq 1}  \binom{n-i-j-k-2}{l-i-j} q^{l-i}r^{n-l}s^{l-\delta_{k,0} j}t^{n-l-k}$$
 where $\delta_{k, 0}$ is the Kronecker delta function.
\eth\label{F1/2/3}
\begin{proof}
By Theorem~\ref{avoidance}, any word $w \in  R_n(1/2/3)$ is composed solely  of ones and twos. Let $l$ denote the number of ones in $w$.  If such a word is weakly increasing, it is easy to see that these words contribute
$$1 + \sum \limits_{l=1}^{n-1}r^{n-l}s^{l}$$
to the generating function.  

Otherwise, let $w$ have at least one descent and $l$ ones. We can see that the word $w$ has the form $1^{i}w'1^{j}2^k$, where $i, j \geq 1$, the subword $w'$ begins and ends with a two, and $0\leq k \leq n-l-1$. 

For such $w$ the lb statistic is given by the number of ones after the first two, that is, by the number of ones not in $1^i$. Thus, $\lb(w) = l-i$. The ls statistic is given by the total number of twos in $w$, namely $n-l$. For the rb statistic, if $k$ is non-zero, then each one in $w$ contributes to the statistic.  Otherwise, only the ones that are not in $1^j$ contribute.  Combining the two cases gives  $\rb(w) = l-\delta_{k,0} j$. Finally, the rs statistic is given by the number of twos in $w'$, namely $n-l-k$. Putting all four statistics together produces
$$q^{\lb(w)}r^{\ls(w)}s^{\rb(w)}t^{\rs(w)} = q^{l-i}r^{n-l}s^{l-\delta_{k,0} j}t^{n-l-k}.$$

Choosing the number of ways of arranging the ones in $w'$ gives a coefficient of $$\binom{n-i-j-k-2}{l-i-j}.$$

Summing over $i, j, k, l$ and combining the cases gives our desired polynomial.
\end{proof}

The equations in the following corollary can be derived either by specialization of the four-variable generating function \ree{F1/2/3} and standard hypergeometric series techniques or by using the ideas in the proof of the previous result and ignoring the other three statistics.

\bco
\label{1/2/3cor}
We have
$$\LB_n(1/2/3) = \RS_n(1/2/3) = 1 + \sum \limits_{k = 0}^{n-2} \binom{n-1}{k+1}q^k,$$
and

\eqed{\LS_n(1/2/3) = \RB_n(1/2/3) = (r+1)^{n-1}.}
\eco

In view of the preceeding corollary, it would be nice to find explicit bijections $\phi:  R_n(1/2/3) \to  R_n(1/2/3)$ and $\psi:  R_n(1/2/3) \to  R_n(1/2/3)$ such that $\phi$ takes $\lb$ to $\rs$ and $\psi$ takes $\ls$ to $\rb$.  In the next two propositions, we present such bijections.

\bpr \label{bijection1/2/3}
There exists an explicit bijection $\phi: R_n(1/2/3) \to  R_n(1/2/3)$ such that for $v \in  R_n(1/2/3)$, 
$$\lb(v) = \rs(\phi(v)).$$
\epr
\begin{proof}
Let $v = a_1a_2\dots a_n \in  R_n(1/2/3)$. Define 
$$\phi(v) = a_1(3-a_n)(3-a_{n-1})\dots(3-a_3)(3-a_2).$$  
Because $v \in  R_n(1/2/3)$, by Theorem~\ref{avoidance}, it must be composed of only ones and twos and begin with a one.  It is clear that $\phi(v)$ has the same form, so $\phi$ is well defined. Also, $\phi$ is its own inverse and is therefore a bijection.  

If $\lb(v) = k$, then $v$ must contain a subword $v' = 21^k$ and no subword of the form $21^l$, with $l>k$.  In fact, this condition is clearly equivalent to $\lb(v)=k$.  It follows that $\phi(v') = 2^k1$ is a subword of $\phi(v)$ and $\phi(v)$ has no subword $2^l1$ with  $l>k$.  Therefore, $\rs(\phi(v)) = k = \lb(v)$, as desired.  
\end{proof}

\bpr
There exists an explicit bijection $\psi: R_n(1/2/3) \to  R_n(1/2/3)$ such that for $v \in  R_n(1/2/3)$, 
$$\ls(v) = \rb(\psi(v)).$$
\epr
\begin{proof}
Let $v \in  R_n(1/2/3)$.  If $v = 1^n$, then define $\psi(v) = v$. Clearly in this case $\ls(v) = 0 = \rb(v)$.

Otherwise, let $v = a_1a_2 \dots a_{i-1}a_i 1^{n-i}$ where $a_i = 2$ and $n-i \geq 0$.  Define 
$$\psi(v) = (3 - a_i)(3-a_{i-1}) \dots (3-a_2)(3-a_1)1^{n-i}.$$
The proof is now similar to that of Proposition~\ref{bijection1/2/3}, using the fact that the $1^{n-i}$ at the end of $v$ contributes to neither $\ls$ or $\rb$. 
\end{proof}

%%%%%%%%%%%%%%%%%%%%%%%%%%%%%%%%
%
% 	1/23
%
%%%%%%%%%%%%%%%%%%%%%%%%%%%%%%%%

\section{The pattern $1/23$}
\label{sec1/23}

In this section we will determine $F_n(1/23)$, and thus the generating functions for all four statistics. We will find that $\lb$ and  $\rs$ are equal for any $w\in  R_n(1/23)$.

\begin{thm} \label{F1/23} We have \begin{equation}\label{F1/23eq} F_n(1/23)=(rs)^{\binom{n}{2}}+\sum_{m=1}^{n-1}\sum_{j=1}^{m}(qt)^{j-1}r^{\binom{m}{2}}s^{(n-m)(m-1)+m-j+\binom{m-1}{2}}.\end{equation}
\end{thm}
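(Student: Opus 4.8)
By Theorem~\ref{avoidance}(2), every $w\in R_n(1/23)$ is obtained by inserting a single $1$ into a word of the form $1^l 23\dots m$ for some $l\ge 0$ and $m\ge 1$. The plan is to parametrize these words, compute the four statistics on each, and sum. First I would split off the degenerate case. When $m=1$ the underlying word is $1^{l}$ with a single extra $1$ inserted, giving $w=1^n$; more usefully, the extreme case where the word is strictly increasing $w=123\dots n$ (the inserted $1$ sitting at the front of the run) is the unique RGF with all distinct letters, and it should contribute the isolated term $(rs)^{\binom n2}$: in $12\dots n$ every letter $a_j=j$ has $j-1$ strictly smaller letters to its left and $n-j$ strictly larger letters to its left, so $\ls(w)=\sum_j(j-1)=\binom n2$ and $\rs(w)=0$, while by symmetry $\lb$ and $\rb$ give $0$ and $\binom n2$ respectively. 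I would verify $\lb=\rs=0$, $\ls=\rb=\binom n2$ here, matching the leading term.

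**The main sum.**

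For the generic terms I would fix the number of distinct letters, say $m$, appearing in $w$ (equivalently the maximum letter), so $\max w=m$ with $1\le m\le n-1$ in the non-strictly-increasing cases. The structure of $R_n(1/23)$ words is rigid: each letter $2,3,\dots,m$ appears exactly once, the letter $1$ appears $n-m+1$ times, and there is \emph{at most one} $1$ appearing after the letter $2$ (this is exactly the ``inserting a single $1$'' condition, reread through part~(2) of Theorem~\ref{avoidance}). I would introduce a second index $j$ recording the position of the sole ``late'' $1$ relative to the increasing run $2\,3\,\dots\,m$, i.e. how many of the letters $2,\dots,m$ sit to its left; this is the index $j$ ranging $1\le j\le m$ in \eqref{F1/23eq}. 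The key computation is then to express each of $\lb,\ls,\rb,\rs$ as a function of $m$ and $j$ alone. I expect $\lb(w)=\rs(w)=j-1$ (the single displaced $1$ has exactly $j-1$ larger letters before it, contributing to $\lb$, and by the layered/symmetric structure the same count feeds $\rs$), which is precisely the announced equality $\lb=\rs$; and $\ls(w)=\binom m2+(\text{contributions of the repeated }1\text{'s})$, which I would track carefully to land on the exponents $r^{\binom m2}$ and $s^{(n-m)(m-1)+m-j+\binom{m-1}2}$.

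**Where the work concentrates.**

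The routine but error-prone part — and the main obstacle — is the exact bookkeeping for $\ls$ and $\rb$, since these count distinct smaller (resp.\ bigger) \emph{values}, and the many copies of the letter $1$ interact with the single run of distinct larger letters in a way that depends delicately on whether each $1$ precedes or follows the run. I would organize this by computing, for each individual letter of $w$, its $\ls$- and $\rb$-contribution: every copy of $1$ occurring after some of the letters $2,\dots,m$ contributes those distinct larger values to its own $\rb$ or $\ls$ count, and summing over the $n-m$ ``bulk'' leading $1$'s versus the one displaced $1$ produces the split $(n-m)(m-1)$, $m-j$, and $\binom{m-1}2$ terms. Once these per-letter counts are assembled into the monomial $q^{\lb}r^{\ls}s^{\rb}t^{\rs}=(qt)^{j-1}r^{\binom m2}s^{(n-m)(m-1)+m-j+\binom{m-1}2}$, summing over $1\le m\le n-1$ and $1\le j\le m$ and adjoining the strictly-increasing term $(rs)^{\binom n2}$ gives exactly \eqref{F1/23eq}. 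I would close by remarking that the equality of the exponents of $q$ and $t$ in every term is precisely the claimed identity $\lb(w)=\rs(w)$ on $R_n(1/23)$.
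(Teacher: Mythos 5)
Your proposal is correct and follows essentially the same route as the paper: invoke Theorem~\ref{avoidance}(2), split off the strictly increasing word $12\dots n$ contributing $(rs)^{\binom{n}{2}}$, parametrize the remaining words by the maximum $m$ and the insertion position $j$ (each pair determining the word $1^{n-m}23\dots j\,1\,(j{+}1)\dots m$ uniquely), compute $\lb=\rs=j-1$, $\ls=\binom{m}{2}$, and $\rb=(n-m)(m-1)+(m-j)+\binom{m-1}{2}$ letter by letter, and sum. Just fix the small wording slip in your parametrization: the number of letters from $\{2,\dots,m\}$ to the left of the displaced $1$ is $j-1$, not $j$, consistent with your later (correct) claim $\lb(w)=j-1$; also note that the $\binom{m-1}{2}$ piece of $\rb$ comes from the letters $2,\dots,m$ themselves rather than from the copies of $1$.
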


\begin{proof} 
If $\sigma$ avoids $1/23$ we know from Theorem~\ref{avoidance} that the associated RGF is obtained by inserting a single $1$ into a word of the form $1^l23\dots m$ for some $l\geq 0$ and $m\geq 1$. If  $l=0$ then the inserted $1$ must be at the beginning of the word in order for $w$ to be a RGF, so $w=12\dots n$. If $l>0$ then the inserted $1$ can be inserted after $j$ for any $1\leq j \leq m$, and the maximal letter $m$ satisfies $1\leq m \leq n-1$. If $w$ has maximal letter $m$ and we insert the $1$ after $j$ then $w$ is completely determined to be $1^{n-m}23\dots j1\dots m$.

In summary, either $w=12\dots n$ or $w$ is determined by the choice of $1\leq j \leq m$ and $1\leq m \leq n-1$. If $w=12\dots n$ then $\rb(w)=\ls(w)=\binom{n}{2}$ and $\lb(w)=\rs(w)=0$. For all other $w$ 
we have the following:

\begin{enumerate}
\item $\lb(w)=j-1$, 
\item $\ls(w)=\binom{m}{2}$, 
\item $\rb(w)=(n-m)(m-1)+m-j+\binom{m-1}{2}$
\item $\rs(w)=j-1$.
\end{enumerate}

1.  Only the inserted $1$ has elements which are left and bigger which are the numbers $2$ through $j$. So $\lb(w)=j-1$. 

2.  Since $w$ is an RGF every letter $i$ contributes $i-1$ to the $\ls$ giving a total of $\ls(w)=1+\dots +(m-1)=\binom{m}{2}$. 

3.  The first $n-m$ ones of $w$ each have $m-1$ elements which are right and bigger, so they contribute $(n-m)(m-1)$ to the $\rb$. The inserted $1$ has $m-j$ letters which are right and bigger.  Any element $i$ such that $2\leq i \leq m$ appears only once and contributes $m-i$ to the $\rb$. This means we have an additional $(m-2)+\dots +0 =\binom{m-1}{2}$. Hence $\rb(w)=(n-m)(m-1)+m-j+\binom{m-1}{2}$. 

4.  The only elements which have a number right and smaller are the elements $2$ through $j$, and the only number which is right and smaller of these elements is the inserted $1$. Hence $\rs(w)=j-1$. 

Summing over all the valid values for $m$ and $j$ gives us our equality. 
\end{proof}

The following result can be quickly seen by specializing Theorem~\ref{F1/23} or its demonstration,
so we have omitted the proofs.

\begin{cor}
\label{1/23cor}
We have $\lb(w)=\rs(w)$ for all words $w\in  R_n(1/23)$  and $$\LB_n(1/23)=\RS_n(1/23)=1+\sum_{j=1}^{n-1}(n-j)q^{j-1}.$$
Also
$$\LS_n(1/23)=r^{\binom{n}{2}}+\sum_{m=1}^{n-1}m r^{\binom{m}{2}},$$
and

 \eqed{\RB_n(1/23)=s^{\binom{n}{2}}+\sum_{m=1}^{n-1}\sum_{j=1}^{m}s^{(n-m)(m-1)+m-j+\binom{m-1}{2}}.}
\end{cor}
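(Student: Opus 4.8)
The plan is to obtain every assertion of the corollary by specializing the four-variable generating function $F_n(1/23)$ from Theorem~\ref{F1/23}, together with the explicit per-word statistic values recorded in its proof. No new combinatorics is required; the work is entirely bookkeeping on the double sum in \ree{F1/23eq}.

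First I would dispose of the identity $\lb(w)=\rs(w)$. The demonstration of Theorem~\ref{F1/23} shows that for $w=12\dots n$ one has $\lb(w)=\rs(w)=0$, while for every other $w\in R_n(1/23)$ both statistics equal $j-1$, where $j$ is the insertion index. Hence $\lb(w)=\rs(w)$ holds termwise. Equivalently, in \ree{F1/23eq} the variables $q$ and $t$ occur only in the combined factor $(qt)^{j-1}$, so $F_n(1/23)$ depends on them solely through the product $qt$; this is exactly the statement that the joint distribution of $(\lb,\rs)$ is supported on the diagonal. To get the univariate polynomial I would set $r=s=t=1$ (keeping $q$) in \ree{F1/23eq}, turning $(rs)^{\binom{n}{2}}$ into $1$ and collapsing the double sum to $\sum_{m=1}^{n-1}\sum_{j=1}^{m}q^{j-1}$. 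Interchanging the order of summation---for fixed $j$ the index $m$ runs from $j$ to $n-1$, contributing $n-j$ equal terms---yields $\sum_{j=1}^{n-1}(n-j)q^{j-1}$, and adding back the $1$ gives the claimed $\LB_n(1/23)=\RS_n(1/23)$.

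Next I would read off $\LS_n(1/23)$ and $\RB_n(1/23)$ by the analogous substitutions. Setting $q=s=t=1$ in \ree{F1/23eq} leaves $r^{\binom{n}{2}}$ from the first term and $\sum_{m=1}^{n-1}\sum_{j=1}^{m}r^{\binom{m}{2}}$ from the double sum; since the exponent $\binom{m}{2}$ is independent of $j$, the inner sum simply multiplies by $m$, producing $\sum_{m=1}^{n-1}m\,r^{\binom{m}{2}}$ and hence the stated $\LS_n(1/23)$. Symmetrically, setting $q=r=t=1$ preserves the full $s$-exponent $(n-m)(m-1)+m-j+\binom{m-1}{2}$ in each summand and sends $(rs)^{\binom{n}{2}}$ to $s^{\binom{n}{2}}$, giving $\RB_n(1/23)$ exactly as written, with no further simplification available because that exponent genuinely depends on both $m$ and $j$.

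There is no serious obstacle here: the only non-mechanical step is the summation interchange in the $\LB_n$ computation, and even that is routine. The one point worth stating carefully is that the diagonal claim $\lb=\rs$ is not merely a consequence of $\LB_n=\RS_n$ as polynomials but a stronger termwise equality, which is why I would argue it directly from the per-word values in the proof of Theorem~\ref{F1/23} rather than from the generating function alone.
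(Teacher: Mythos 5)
Your proposal is correct and matches the paper's intended argument exactly: the paper states that the corollary ``can be quickly seen by specializing Theorem~\ref{F1/23} or its demonstration'' and omits the details, which are precisely the substitutions and the summation interchange you carry out. Your observation that the termwise identity $\lb(w)=\rs(w)$ follows from the per-word values $j-1$ in the proof of Theorem~\ref{F1/23} (equivalently, from $q$ and $t$ appearing only through the factor $(qt)^{j-1}$) is exactly the right justification for the stronger diagonal claim.
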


%%%%%%%%%%%%%%%%%%%%%%%%%%%%%%%%
%
% 	13/2
%
%%%%%%%%%%%%%%%%%%%%%%%%%%%%%%%%

\section{The pattern 13/2 }
\label{sec13/2}

In this section, we begin by evaluating the four-variable generating function $F_n(13/2)$. Goyt and Sagan~\cite{gs:sps} have previously proven a theorem regarding the single-variable generating functions for the $\ls$ and $\rb$ statistics, and we will adapt their map and proof to obtain the multi-variate generating function for $13/2$. This generating function is closely related to integer partitions. A {\em partition $\lambda = (\lambda_1, \lambda_2, \dots, \lambda_k)$  of an integer $t$} is a weakly increasing sequence of positive integers such that $\sum_{i=1}^k \lambda_i = t$. The $\lambda_i$ are called \emph{parts}. Additionally, we will define an integer partition $n-\lambda = (n-\lambda_k,\dots n-\lambda_2, n-\lambda_1)$.  Let $|\lambda| = \sum_{i=1}^k \lambda_i$. We will denote by $D_{n-1}$  the set of integer partitions with distinct parts of size at most $n-1$. 

\bth
\label{13/2}
We have 
$$F_n(13/2) = \prod_{i=1}^{n-1} (1+r^{n-i} s^i).$$
\eth

\begin{proof}

Suppose $w \in  R_n(13/2)$. By Theorem~\ref{avoidance}, $w$ is layered and so  $\lb$ and $\rs$ are zero, resulting in no contribution to the generating function.  For the other two statistics, since $w$ is layered it has the form
$w=1^{n_1}2^{n_2} \dots m^{n_m}$ where $m$ is the maximum element of $w$.   Define $\phi:  R_n(13/2) \to D_{n-1}$ by
$$ \phi(w) = (\lambda_1, \lambda_2, \dots, \lambda_{m-1})$$
where $\lambda_j = \sum_{i=1}^{j}n_i$ for $1 \leq j \leq m-1$.  Note that since the $n_j$ are positive, the $\la_j$ are distinct, increasing,  and less than $n$ since the sum never includes $n_m$.  Thus the map is well defined.

We now show that $\phi$ is a bijection by constructing its inverse.  Given $\lambda = (\lambda_1, \lambda_2, \dots, \lambda_{m-1})$, consider for $1 \leq j \leq m$, the differences $n_j = \lambda_j - \lambda_{j-1}$, where we define $\lambda_0 = 0$ and $\lambda_m = n$. It is easy to see that sending $\la$ to $w=1^{n_1}2^{n_2} \dots m^{n_m}$ is a well-defined inverse for $\phi$.

We next claim that if $\phi(w)=\la$ then $\rb(w)=|\la|$.  Indeed, from the form of $w$ and $\la$ we see that
$$
\rb(w)=\sum_{i=1}^{m-1} n_i(m-i)=\sum_{j=1}^{m-1} \sum_{i=1}^{j}n_i = |\la|.
$$
Similarly we obtain $\ls(w)=|n-\la|$.  It follows that
$$
F_n(13/2)=\sum_{\lambda \in D_{n-1}}r^{|n-\lambda|}s^{|\lambda|}=\prod_{i=1}^{n-1} (1+r^{n-i} s^i)
$$
as desired.
\end{proof}

The generating function of each individual statistic is easy to obtain by specialization of Theorem~\ref{13/2} so we have omitted the proofs.

\bco[\cite{gs:sps}] 
\label{13/2cor} 
We have
$$\LB_n(13/2) = 2^{n-1} = \RS_n(13/2)$$
and

\eqed{\LS_n(13/2) = \prod_{i=1}^{n-1} (1 + q^i) = \RB_n(13/2).}
\eco

%%%%%%%%%%%%%%%%%%%%%%%%%%%%%%%%
%
% 	12/3
%
%%%%%%%%%%%%%%%%%%%%%%%%%%%%%%%%

\section{The pattern $12/3$}
\label{sec12/3}

In this section, we determine $F_n(12/3)$.   The other polynomials associated with $12/3$ are obtained as corollaries. We find this avoidance class interesting because it leads to a connection with number theory.
\bth\label{F12/3} We have
 \begin{equation}\label{F12/3eq}
F_n(12/3) = r^{\binom{n}{2}}s^{\binom{n}{2}}+\sum_{m=1}^{n-1}\sum_{i=1}^{m}q^{(n-m)(m-i)}r^{\binom{m}{2}+(n-m)(i-1)}s^{\binom{m}{2}}t^{m-i}.
\end{equation}
\eth
\begin{proof} By Theorem~\ref{avoidance}, the elements of $ R_n(12/3)$ are the words of the form 
$$
w = 123\dots m i^{n-m}
$$ 
where $i\le m$. If $w=123\dots n$ then $\ls(w)=\rb(w)=\binom{n}{2}$ and $\lb(w)=\rs(w)=0$. Otherwise $m<n$. In this case, we will show the following:
\ben
\item $\lb(w) = (n-m)(m-i)$,
\item $\ls(w) = \binom{m}{2} + (n-m)(i-1)$, 
\item $\rb(w) = \binom{m}{2}$, 
\item $\rs(w) = m-i$.
\een

1. There are $n-m$ copies of $i$ in $w$ and these are the only elements contributing to $\lb$. Each $i$ has the elements $(i+1)(i+2)\dots m$ to its left that are bigger than it. So $\lb(i) = m-i$ for all $i$ and $\lb(w) = (n-m)(m-i)$.

2. Each element $w_j$ of $w$ has $\ls(w_j)=w_j-1$ by condition \ree{RGF}. Using this and the form of $w$ easily yields the desired equality. 

3.  This is similar to the previous case, noting that only the initial run of $w$ contributes to $\rb$.

4. We can see that the only elements $w_j$ with $\rs(w_j)>0$ will be those in the initial run such that $w_j>i$. These are precisely the elements $(i+1)(i+2)\dots m$ and each element has exactly one element to its right that is smaller than it. So $\rs(w) = m-i$. 

Summing over the valid values of $m$ and $i$, we have \ree{F12/3eq}.
\end{proof}

The next corollary follows easily by specialization of \ree{F12/3}. 
\bco\label{ls12/3} We have 
$$
\LS_n(12/3) = r^{\binom{n}{2}} + \sum_{m=1}^{n-1}\sum_{i=1}^{m}r^{\binom{m}{2}+ (n-m)(i-1)},
$$
and
$$
\RB_n(12/3)=s^{\binom{n}{2}}+ \sum_{m=1}^{n-1} ms^{\binom{m}{2}},
$$
as well as

\eqed{\RS_n(12/3)=1 + \sum_{k=0}^{n-2}(n-k-1)t^k.}
\eco

The coefficients of  $\LB_n(12/3)$ have an interesting interpretation.

\bpr We have 
\begin{equation}\label{lb12/3} 
\LB_n(12/3)=\sum_{k=0}^{\left\lfloor (n-1)^2/4 \right\rfloor} D_kq^k,
\end{equation}
where $D_k = \#\{d\ge1 : d\mid k \text{ and } d+ \frac{k}{d} +1 \leq n\}$.
\epr
\begin{proof}  Set $r=s=t=1$ in~\ree{F12/3eq}. We begin by showing the degree of $\LB_n(12/3)$ is $\left\lfloor (n-1)^2/4\right\rfloor$. 
By \ree{avoidance} we can let $w= 123\dots m i^{n-m}$ be a word in $ R_n(12/3)$. 

In order to maximize the $\lb(w)$, we can assume $i=1$. 
So, using the formula for $\lb(w)$ derived in the proof of Theorem~\ref{F12/3}, we must maximize $(n-m)(m-1)$. We take the derivative with respect to $m$ and set the equation equal to zero to obtain $n-2m+1=0$ 
and $m=\frac{n+1}{2}$. To get integer values of $m$, we obtain
\begin{equation}
\begin{cases}
m = \frac{n+1}{2} \mbox{ if } n \text{ is odd}, \\
m = \left\lceil \frac{n+1}{2}\right\rceil \mbox{ or } \left\lfloor\frac{n+1}{2}\right\rfloor \mbox{ if } n \text{ is even.}
\end{cases}
\end{equation}
In either case, the maximum value of $\lb$ is $\left\lfloor (n-1)^2/4\right\rfloor$.

We now show the coefficient of $q^k$ is $D_k$. As before, let $w= 123\dots m i^{n-m}$ be a word associated with a set partition that avoids $12/3$ and let $\lb(w)=k$. If we let $d=n-m$ be the number of $i$'s, it is clear that $\lb(w) = d(m-i) = k$ and therefore, $m-i = \frac{k}{d}$. Because $w$ must be of length $n$, we now must determine which divisors $d$ of $k$ are valid.  Each of the $d$ trailing $i$'s has $\frac{k}{d}$ elements to its left and bigger. Because $i\geq 1$, the leading one cannot be such an element.  Thus in order for $w$ to be of length $n$  we must have  $d+\frac{k}{d}+1 \leq n$.
\end{proof}

The above formulation of $\LB_n(12/3)$ leads to the following corollary, showing a connection to number theory.
\bco When $k\leq n-2$, we have $D_k=\tau(k)$, the number-theoretic  function which counts the divisors of $k$.
\eco
\begin{proof} We show that if $k\leq n-2$ then all positive divisors $d$ of $k$ are valid. We know that $d+\frac{k}{d}\leq k+1$ because $d=1$ and $d=k$ are the divisors of $k$ which maximize $d+\frac{k}{d}$. Thus, we have $d+\frac{k}{d}+1 \leq k+2 \leq n$. Therefore every positive divisor of $k$ satisfies the inequality in the definition of $D_k$, and this implies $D_k=\tau(k)$.
\end{proof}

%%%%%%%%%%%%%%%%%%%%%%%%%%%%%%%%
%
% 	123
%
%%%%%%%%%%%%%%%%%%%%%%%%%%%%%%%%

\section{The pattern $123$}
\label{sec123}

The reader will have noticed that for the other four set partitions of [3], we provided a $4$-variable generating function describing all four statistics on the avoidance class of those partitions. 
The pattern $123$, however, is much more difficult to deal with and so we will content ourselves with results about the individual statistics. We will start with the left-smaller statistic.

\bth\label{thm:ls123} We have 
\begin{equation}\label{eq:ls123}
\LS_n(123) = \sum_{m = \ce{n/2}}^n \left [ \sum_{L} \left ( \prod_{g=1}^{n-m} (m-\ell_g + g) \right )q^{\binom{m}{2} + \sum\limits_{\ell \in L} (\ell - 1)} \right ]
\end{equation}
where the inner sum is over all subsets $L = \{\ell_1, \ell_2, \dots, \ell_{n-m} \}$ of $[m]$
 with $\ell_1 > \dots > \ell_{n-m}$.
\eth

\bprf

We start by noting that if a word has a maximum element $m$, then there must be $n-m$ repeated elements in the word, i.e., elements $i$ that appear after the initial occurrence of $i$. The bounds on our outer sum are given by the largest possible value of $m$ being $n$, and the smallest possible value of $m$ being $\ce{n/2}$, since we can repeat each element a maximum of two times.
We will now build our word $w$ by starting with a base sequence $12\dots m$ and adding in repeated elements. The base sequence will contribute $1 + 2 + \dots + (m-1) = \binom{m}{2}$ to $\ls(w)$. Let $L$ be the set of repeated elements we want to add to $w$. Then $L$ must contain $n-m$ elements from $[m]$, and since $w$ can have no element appear more than twice, $L$ can have no element appear more than once. 
For each element $\ell \in L$ that we add to our base sequence, we will increase $\ls(w)$ by $\ell - 1$.
So for any word $w$ with maximum $m$ formed in this way, we have $\ls(w) = \binom{m}{2} + \sum_{\ell \in L} (\ell - 1)$.

To find how many possible words can be so created, we start with our base sequence $12\dots m$, and build up our word by placing in the repeated elements from $L$ one at a time. There are $m - (\ell_1 - 1)$  spots where we can place the largest repeated element, $\ell_1$: anywhere after the original occurrence of $\ell_1$. Then when we place our second repeated element, $\ell_2$, we will have $m - (\ell_2 - 1) + 1$ spots, where the plus one comes from the extra space the first repeated element added in front of $\ell_2$. In general, when we place $\ell_g$ we will have $m - (\ell_g - 1) + (g-1) = m - \ell_g + g$ places to put it. The condition $\ell_1 > \dots > \ell_{n-m}$ is used since it implies that regardless of where $\ell_i$ is placed, one will have the same number of choices for the placement of $\ell_{i+1}$. 
Multiplying all these terms together and then summing over all possible subsets $L$ of $[m]$ gives us the coefficient of $q$. Finally, summing over all possible maximums of the words in the avoidance class gives us equation~\eqref{eq:ls123}.
\eprf

We were only able to find explicit expressions for certain coefficients of the polynomials generated from other statistics. We will now look at the left-bigger statistic.

\bth\label{lb123degree} We have the following.
\ben
\item The degree of $\LB_n(123)$ is $$\left\fl{\frac{n(n-1)}{6}\right}.$$

\item  The leading coefficient of $\LB_n(123)$ is
$$\begin{cases}
k! & \text{if } n = 3k \text{ or } 3k+1,\\
(k+2)k! & \text{if } n = 3k+2,\\
\end{cases}$$
for some nonnegative integer $k$.
\een
\eth

\bprf
We will show that a word of the form $w = 12\dots iw_{i+1}\dots w_n$ with $w_{i+1},\dots, w_n$ being a permutation of the interval $[1,n-i]$ will provide a maximum $\lb$ which is $\fl{(n(n-1))/6}$.

First we will prove that the elements after the initial run $12\dots i$ must be less than or equal to $i$. Note that, by definition of the initial run, $w_{i+1} \le i$. Now suppose, towards a contradiction, that for some $j \in [i+2,n]$, there was some element $w_j > i$. Then, since $w$ is an RGF, we must have $w_k = i+1$ for some $k \in [i+2,j]$. But by switching $w_k$ and $w_{i+1}$, we would increase $\lb$ by at least one since $w_{i+1} \le i$. So if any element after the initial run is greater than $i$, $\lb$ is not maximum.

Next we will show that the elements after the initial run have to be exactly those in the interval $[1,n-i]$, up to reordering. Suppose towards contradiction there was some element $t \in [1,n-i]$ that did not appear in the sequence after the initial run, and instead there appeared some element $s \in [n-i+1,i]$. Then $\lb(s) = i-s$. But $\lb(t) = i-t$, and since $s > t$, it follows that $\lb(t) > \lb(s)$. Therefore, if we want to maximize $\lb$, we must have the sequence after the initial run being exactly the interval $[1,n-i]$, up to reordering.

Now that we've established that our word is of the form $w = 12\dots iw_{i+1}\dots w_n$ with $w_{i+1},\dots ,w_n$ being exactly those elements in the interval $[1,n-i]$, we simply need to maximize $\lb$ using some elementary calculus. 
\begin{align}
\lb(w) &= (i - w_{i+1}) + (i - w_{i+2}) + \dots + (i - w_n) \nonumber\\
	&= (i - 1) + (i - 2) + \dots + (2i- n)\nonumber \\
	&= \frac{(4n+1)i - 3i^2 - n^2 - n}{2}\label{lbeqn}
\end{align}

Considering $i$ as a real variable and differentiating gives us a maximum value of $\lb(w)$ when $i = (4n+1)/6$. We must modify this slightly since we want $i$ to be integral. Rounding $i$ to the closet integer gives
$$i = \begin{cases}
\left \lfloor \frac{4n+1}{6}\right\rfloor & \text{if } n = 3k,\\
\left \lceil \frac{4n+1}{6}\right\rceil & \text{if } n = 3k+1,\\
\left \lfloor \frac{4n+1}{6}\right\rfloor \text{or }\left \lceil \frac{4n+1}{6}\right\rceil & \text{if } n = 3k+2,\\
\end{cases}$$ 
for some nonnegative integer $k$.

Plugging each value of $n$ and $i$ back into equation~\eqref{lbeqn} gives us an $\lb$ of $\fl{(n(n-1))/6}$ in all cases. As we've mentioned before, the elements $w_{i+1}, \dots, w_n$ must be exactly those in the interval $[1,n-i]$, but the ordering doesn't matter. This means the leading coefficient of $\LB_n(123)$ will be precisely the number of ways to permute the $n-i$ elements after the initial run. This gives us our second result.
\eprf

Our next theorem will involve the Fibonacci numbers. Recall that the $n$th Fibonacci number $F_n$ is defined recursively as 

\begin{equation}\label{fib}
F_n = F_{n-1} + F_{n-2}
\end{equation}
with initial conditions $F_0 = 1$ and $F_1 = 1$.

\bth\label{lbcoeffs} We have the following coefficients.
\ben
\item The constant term of $\LB_n(123)$ is $F_n$.

\item The coefficient of $q$ in $\LB_n(123)$ is $(n-2)F_{n-2}$.
\een
\eth

\bprf 
	If $\lb(\sigma) = 0$, then $w = w(\sigma)$ must be layered. Let $L(n)$ be the set of layered words $w(\sigma)$ with $\sigma \in \Pi_n(123)$. It follows that the constant term of $\LB_n(123)$ is $\#L(n)$. Define $L_i(n) = \{w \in L(n)\ |\ w \text{ starts with } i \text{ ones}\}$. 
Then $\#L(n) = \#L_1(n) + \#L_2(n)$. But $\#L_i(n) = \#L(n-i)$ for $i = 1,2$, since if $w$ begins with $i$ ones then the rest of the word is essentially a layered word with $n-i$ elements.  Therefore, $\#L(n) = \#L(n-1) + \#L(n-2)$. Since $\#L(0) = 1$ and $\#L(1) = 1$, we have $\#L(n) = F_{n}$. 

To prove the second claim, let $w \in  R_n(123)$ with $\lb(w) = 1$. Then  there must be exactly one descent in $w$ and it must be of the form $w_{j+1}=w_j-1$ for some $2\le j\le n- 1$.  Removing $w_j$ and $w_{j+1}$ from $w$ and then subtracting one from all $w_k$ with $k>j+1$ gives an element $w'\in R_{n-2}$ which is layered.  So, from the previous paragraph, there are $F_{n-2}$ choices for $w'$.  Further, there were $n-2$ choices for $j$ and so the total number of $w$ is $(n-2)F_{n-2}$.
\eprf

We will now look at the right-smaller statistic.

\bth\label{RSdegree/con} We have the following.

\ben

\item The degree of $\RS_n(123)$ is $$\left\fl{\frac{(n-1)^2}{4}\right}.$$

\item The leading coefficient of $\RS_n(123)$ is $1$ when $n$ is odd, and $2$ when $n$ is even.

\item The constant term of $\RS_n(123)$ is $F_n$.

\een
\eth

\bprf
The proof of the first result is very similar to the proof of the degree of $\LB_n(123)$. When looking at the right-smaller statistic, the word that maximizes $\rs$ is of the form $w = 12\dots i (n-i) \dots 21$, where $12\dots i$ is the initial run. Calculating $\rs(w)$ gives 
\beq\label{maxrs}
\rs(w) = (n-i)(i-1),
\eeq
and differentiating with respect to the real variable $i$ and maximizing gives $i = (n+1)/2$. Since we want $i$ to be integral, we have 
$$i = \begin{cases}
 \frac{n+1}{2} & \text{if } n \text{ is odd,}\\
\left \lfloor \frac{n+1}{2}\right\rfloor \text{or }\left \lceil \frac{n+1}{2}\right\rceil & \text{if } n \text{ is even.}\\
\end{cases}$$ 
Plugging each value of $i$ and $n$ into \eqref{maxrs} gives $\fl{(n-1)^2/4}$ in both cases.  Also, the number of choices for $i$ gives the leading coefficient of $\RS_n(123)$. 

The proof for  the constant term of $\RS_n(123)$ is the same as for $\LB_n(123)$  since for any $w$ we have $\rs(w) = 0$ if and only if $\lb(w) = 0$.
\eprf

Our final result of this section gives the degree of $\RB_n(123)$.   It follows immediately from the easily proved fact that  the word which maximizes $\rb$ is  $w = 12\dots n$.

\bth\label{rbdegree}
$\RB_n(123)$ is monic and has degree $\binom{n}{2}.$\hqed
\eth

\section{Multiple pattern avoidance}
\label{secmpa}

\begin{table}
\begin{center}
\scalebox{1.1}{
\begin{tabular}{ |c|c| }
 \hline
  \multicolumn{1}{|c|}{Avoidance Class} & \multicolumn{1}{|c|}{Associated RGFs}  \rule{0pt}{20pt}\\ [5pt]
  \hline 
 $\Pi_n(1/2/3, 1/23)$ & $1^n,\ 1^{n-1}2,\ 1^{n-2}21$   \rule{0pt}{20pt}\\ [5pt]
  \hline
  $\Pi_n(1/2/3, 13/2)$ & $1^m2^{n-m}$ for all $1 \leq m \leq n$  \rule{0pt}{20pt}\\ [5pt]
  \hline
  $\Pi_n(1/2/3, 12/3)$ & $1^n,\ 12^{n-1},\ 121^{n-2}$  \rule{0pt}{20pt}\\ [5pt]
  \hline
 $\Pi_n(1/23, 13/2)$ & $1^{n-m+1}23 \dots m$ for all $1 \leq m \leq n$  \rule{0pt}{20pt}\\ [5pt]
  \hline
  $\Pi_n(1/23, 12/3)$ & $1^n,\ 12 \dots (n-1)1,\ 12 \dots n$  \rule{0pt}{20pt}\\ [5pt]
  \hline
 $\Pi_n(1/23, 123)$ & $12 \dots n,\ 12 \dots (n-1)$ with an additional $1$ inserted  \rule{0pt}{20pt}\\ [5pt]
  \hline
 $\Pi_n(13/2, 12/3)$ & $12 \dots m^{n-m+1}$ for all $1 \leq m \leq n$ \rule{0pt}{20pt}\\ [5pt]
  \hline
 $\Pi_n(13/2, 123)$ & layered RGFs with at most two elements in each layer  \rule{0pt}{20pt}\\ [5pt]
  \hline
  $\Pi_n(12/3, 123)$ & $12 \dots (n-1)m$ for all $1 \leq m \leq n$  \rule{0pt}{20pt}\\ [5pt]
  \hline

\end{tabular}}\\
\caption{Avoidance classes avoiding two partitions of $[3]$ and associated RGFs}
\label{table:avoidance elements}
\end{center}
\end{table}

Rather than avoiding a single pattern, one can avoid multiple patterns.  Define, for any set $P$ of set partitions
$$
\Pi_n(P)=\{\si\in\Pi_n\ :\ \text{$\si$ avoids every $\pi\in P$}\}.
$$
Similarly adapt the other notations we have been using.  
Goyt~\cite{goy:apt} characterized that cardinalities of $\Pi_n(P)$ for any  $P\sbe\fS_3$.
Our goal in this section is to do the same for $F_n(P)$.  We will not include those $P$ containing both $1/2/3$ and $123$ since it is easy to see from Theorem~\ref{avoidance} that there are no such partitions for $n\ge5$.

Table~\ref{table:avoidance elements} shows the avoidance classes and the resulting restricted growth functions that arise from avoiding two patterns of length $3$.  These as well as the entries in Table~\ref{table:threefour patterns} also appear in Goyt's work, but we include them here for completeness.
 For ease of references, we give a total order to $\Pi_3$ as follows
\beq
\label{order}
1/2/3,\ 1/23,\ 13/2,\ 12/3,\ 123
\eeq
and list the elements of any set $P$ in lexicographic order with respect to~\ree{order}.  Finally, for any $P\sbe \Pi_3$ we have $\Pi_n(P)=\Pi_n$ for $n<3$.  So we assume for the rest of this section that $n\ge3$.

The next result translates this table into generating functions.  This is routine and only uses techniques we have seen in earlier sections so the proof is omitted.

\bth \label{F(1/2/3,13/2)}
For $n\ge3$ we have
\begin{align*}
F_n(1/2/3, 1/23) &= 1 + rs^{n-1} + qrs^{n-2}t, \\
F_n(1/2/3, 13/2) &= 1+ \sum_{i=1}^{n-1}r^is^{n-i}, \\
F_n(1/2/3, 12/3) &= 1+rs^{n-1} + q^{n-2}rst, \\
F_n(1/23, 13/2) &= 1 + \sum_{i=1}^{n-1} r^{\binom{n-i+1}{2}}s^{\binom{n}{2}-\binom{i}{2}}, \\
F_n(1/23, 12/3) &= 1 + (qt)^{n-2}(rs)^{\binom{n-1}{2}} + (rs)^{\binom{n}{2}}, \\
F_n(1/23, 123) &= (rs)^{\binom{n}{2}} + r^{\binom{n-1}{2}} \sum_{i=0}^{n-2}(qt)^is^{\binom{n}{2}-i-1}, \\
F_n(13/2, 12/3) &= 1 + \sum_{i=1}^{n-1}r^{\binom{n}{2}-\binom{i}{2}}s^{\binom{n-i+1}{2}}, \\
F_n(13/2, 123) &=  \sum_{m = \ce{n/2}}^n \left [ \sum_{L} r^{\binom{m}{2} + \sum\limits_{\ell \in L} (\ell - 1)} s^{\binom{m}{2} + \sum\limits_{\ell \in L} (m - \ell)} \right ], \\  
F_n(12/3, 123) &= ( rs)^{\binom{n}{2}} + s^{\binom{n-1}{2}} \sum_{i=0}^{n-2} (qt)^ir^{\binom{n}{2}-i-1},
\end{align*}
where $L$ and $m$ in $F_n(13/2, 123)$ are defined as in Theorem~\ref{thm:ls123}.\hqed
\eth

Note that from this theorem we immediately get the following nice equidistribution results.
\bco
\label{multicor}
Consider the generating function $F_n(P)$ where $P\sbe\Pi_3$.  
\ben
\item We have $F_n(P)$ invariant under switching $q$ and $t$ if $13/2\in P$ or $P$ is one of 
$$
\{1/2/3, 1/23\}; \ \{1/23, 12/3\}; \ \{1/23, 123\}; \ \{12/3, 123\}.
$$
\item We have $F_n(P)$ invariant under switching $r$ and $s$ if $P$ is one of 
$$
\{1/2/3, 13/2\}; \   \{1/23, 12/3\}.
$$
\item  We have the following equalities between generating functions for different $P$:
$$
F_n(1/23, 13/2;q,r,s,t)=F_n(13/2, 12/3;q,s,r,t)
$$
and

\eqed{F_n(1/23, 123;q,r,s,t)=F_n(12/3, 123;q,s,r,t).}
\een
\eco

Next, we will examine the outcome of avoiding three and four partitions of $[3]$. We can see the avoidance classes and the resulting restricted growth functions  in Table~\ref{table:threefour patterns}. 
 The entries in this table can easily be turned into a polynomial by the reader if desired. Avoiding all five partitions of $[3]$  is not included because it would contain both $1/2/3$ and $123$.

\begin{table}[h]
\begin{center}
\scalebox{1.1}{
\begin{tabular}{ |c|c| }
 \hline
  \multicolumn{1}{|c|}{Avoidance Class} & \multicolumn{1}{|c|}{Associated RGFs}  \rule{0pt}{20pt}\\ [5pt]
  \hline 
 $\Pi_n(1/2/3, 1/23, 13/2)$ & $1^n$, \ $1^{n-1}2$  \rule{0pt}{20pt}\\ [5pt]
  \hline
  $\Pi_n(1/2/3, 1/23, 12/3)$ & $1^n$, \ $121$ when $n=3$  \rule{0pt}{20pt}\\ [5pt]
  \hline
 $\Pi_n(1/2/3, 13/2, 12/3)$ & $1^n$, \ $12^{n-1}$  \rule{0pt}{20pt}\\ [5pt]
  \hline
 $\Pi_n(1/23, 13/2, 12/3)$ & $1^n, \ 12 \dots n$   \rule{0pt}{20pt}\\ [5pt]
  \hline
 $\Pi_n(1/23, 13/2, 123)$ & $1^22 \dots (n-1), \ 12 \dots n$  \rule{0pt}{20pt}\\ [5pt]
  \hline
  $\Pi_n(1/23, 12/3, 123)$ & $12 \dots (n-1)1, \ 12 \dots n$  \rule{0pt}{20pt}\\ [5pt]
  \hline
  $\Pi_n(13/2, 12/3, 123)$ & $12 \dots (n-2)(n-1)^2$, \ $12 \dots n$  \rule{0pt}{20pt}\\ [5pt]
  \hline\hline
 $\Pi_n(1/2/3, 1/23, 13/2, 12/3)$ & $1^n$  \rule{0pt}{20pt}\\ [5pt]
  \hline
  $\Pi_n(1/23, 13/2, 12/3, 123)$ & $12 \dots n$   \rule{0pt}{20pt}\\ [5pt]
  \hline

\end{tabular}}\\
\caption{Avoidance classes and associated RGFs avoiding three and four partitions of $[3]$ }
\label{table:threefour patterns}
\end{center}
\end{table}

\section{Miscellaneous Results}
\label{secmr}

In this section we  present other interesting results we have found. These include theorems regarding longer patterns and several bijections. 
We will start with a sequence of results concerning the pattern $14/2/3$.

Our first theorem concerns applying the $\lb$ statistic to the avoidance class of $14/2/3$, from which a connection arises between $14/2/3$ avoiding set partitions and integer compositions. First, we characterize $ R_n(14/2/3)$. We define the index $i$ to be a \emph{dale of height $a$} in $w$ if $a_i=a$ and
$$
a_i=\max\{a_1,\dots,a_{i-1}\}-1.
$$

\ble
\label{14/2/3_avoid}
For an RGF $w$, $w$ is contained in $ R_n(14/2/3)$ if and only if $w$ meets the following restrictions:
\begin{itemize}
\item for $i\geq 2$ we have $a_i\geq \max\{a_1,\dots,a_{i-1}\}-1$, and
\item if $w$ has a dale of height $a$, then $w$ does not have a dale of height $a+1$. 
\end{itemize}
\ele

\bprf
Let $\sigma$ avoid $14/2/3$. Assume, towards contradiction, that there existed an $a_i$ in $w=w(\sigma)$ with $a_i<\max\{a_1,\dots,a_{i-1}\}-1$ and let $a=a_i$. By the structure of restricted growth functions, this implies that $a(a+1)(a+2)a$ exists as a subword in $w$. But then these four elements  give rise to an occurence of $14/2/3$ in $\sigma$, which is a contradiction. This shows the first inequality.
Now assume that there existed dales of height $a$ and height $a+1$ in $w$. This would require $w$ to contain $(a+1)a(a+2)(a+1)$ as a subword, which again implies an occurance of $14/2/3$ in $\sigma$. This shows the height requirement for dales. 

Now assume that $\sigma$ is a partition with $w=w(\sigma)$ meeting the listed requirements. If $\sigma$ contained $14/2/3$ as a pattern, then $abca$ must occur as a subword in $w$, with $a\neq b\neq c$. If $a$ was the minimum value in this subword, then either $a<b-1$ or $a<c-1$, which contradicts the first restriction put on $w$ in view of the second $a$ in the subword. Further, if $a$ was the maximum value in this subword, then either $b<a-1$ or $c<a-1$, raising the same contradiction in view of the second $a$. Similarly, we can rule out $c<a<b$. Thus the only remaining possibility is that $b<a<c$. By the first condition in the lemma, it then must be that the subword is exactly $a(a-1)(a+1)a$, which contradicts the restriction on dales. Thus $\sigma$ avoids $14/2/3$, showing the reverse implication. 
\eprf

Note that a dale in a word $w$ contributes exactly one to $\lb(w)$.
And by the previous lemma, dales are the only source of $\lb$ for words in $ R_n(14/2/3)$. For the proof of our theorem about $\LB(14/2/3)$ we will also need the following notion: call $i$ a \emph{left-right maximum of value $a$} in $w$ if $a_i=a$ and
$$
a_i>\max\{a_1,\dots,a_{i-1}\}.
$$
Being an RGF is equivalent to having left-right maxima of values $1,2,\dots,m$ for some $m$. 

\bth
\label{LB(14/2/3)}
For $n\geq1$, we have 
$$
\LB_n(14/2/3)=2^{n-1}+\sum_{k=1}^{n-2}\left[\sum_{m\geq2}\binom{n-1}{k+m-1}\sum_{j\geq1}\binom{k-1}{j-1}\binom{m-j}{j}\right]q^k.
$$
\eth

\bprf
It is easy to see that the constant term in this polynomial comes from the layered partitions of $[n]$, all of which avoid $14/2/3$. Now consider the coefficient of $q^k$ for $k\geq 1$. From the discussion before the statement of the theorem, for a word in $ R_n(14/2/3)$ to have an $\lb$ of $k$, it must have $k$ dales. Further, we know that $i=1$ is always a left-right maximum of value $1$ in any RGF, and that $i=1$ is never a dale. It follows by Lemma~\ref{14/2/3_avoid}  that, to completely characterize an RGF of $\lb$ equal to $k$ and maximum value $m$ in $ R_n(14/2/3)$, it suffices to specify the remaining $m-1$ left-right maxima and the $k$ dale indices. As such, there are $\binom{n-1}{m+k-1}$ ways to choose a set $I$ which is the union of these two index sets. 

Let $I=\{i_1<i_2<\dots<i_{m+k-1}\}$ be such a set. We will indicate indices chosen for dales by coloring them blue, and left-right maxima by coloring them red. We define a \emph{run}  to be a maximal sequence of indices $i_c,i_{c+1},\dots,i_d$ which is monochromatic. Let $j$ be the number of blue runs, and let $b_s$ be the number of indices in the $s$th blue run, for $1\leq s\leq j$. As these numbers count the dales in $w$, we must have 
$$
b_1+b_2+\dots+b_j=k,
$$
or equivalently that $b_1,\dots,b_j$ form an integer composition of $k$. Thus there are $\binom{k-1}{j-1}$ ways of choosing $j$ blue runs. 

Now note that $I$ must start with a red run, and can end with either a red or blue run. Thus there are $j$ or $j+1$ red runs. Let $r_t$ be the length of the $t$th red run, for $1\leq t\leq j+1$, where we set $r_{j+1}=0$ if there are $j$ red runs. Furthermore, by the dale height restriction in Lemma~\ref{14/2/3_avoid}, we have $r_t \geq 2$ for $2\leq t\leq j$. Now as before, we have 
$$
r_1+r_2+\dots+r_{j+1}=m-1,
$$
subject to $r_1\geq 1$, $r_2,\dots,r_j\geq 2$, and $r_{j+1}\geq 0$. Using a standard composition manipulation, we can put this sum in correspondence with a composition of $m-j+1$ into $j+1$ parts, which gives $\binom{m-j}{j}$ ways to choose the red runs. Putting everything together and summing over the possible values of $m$ and $j$ gives the coefficient of $q^k$ as 
$$
\sum_{m\geq2}\binom{n-1}{k+m-1}\sum_{j\geq1}\binom{k-1}{j-1}\binom{m-j}{j}.
$$

All that is left is to give appropriate bounds for $k$. It follows by Lemma~\ref{14/2/3_avoid} that $w=121^{n-2}$ is in $ R_n(14/2/3)$ and that $w$ gives a maximizing $\lb$ of $n-2$. This gives $1\leq k\leq n-2$, and provides the correct parameters for the polynomial. 
\eprf

From the previous theorem, and from the characterization of $ R_n(14/2/3)$, several corollaries follow. 

\bco
\label{LB(14/2/3)=RS(14/2/3)}
We have 
$$
\LB_n(14/2/3)=\RS_n(14/2/3).
$$
\eco

\bprf
We proceed by finding a bijection $\phi$ that takes $ R_n(14/2/3)$ to itself, and that takes the $\lb$ statistic to the $\rs$ statistic. Let $w$ be a member of $ R_n(14/2/3)$. From Lemma~\ref{14/2/3_avoid}, we can partition $w$ into sections based on the dales of $w$. Specifically, let $a_i$ be a letter in $w$, and let $a=a_i$. If there is no dale of height $a$ or $a-1$ in $w$, then it follows that every copy of $a$ is adjacent in $w$. That is to say, we can break $w$  into
$$
w=w_1 a^l w_2,
$$
with $a_j<a$ for all $a_j$ in $w_1$, and $a_k>a$ for all $a_k$ in $w_2$. Call such a string a {\em plateau} of $w$. It follows that plateaus in $w$ contribute nothing to $\lb(w)$ or $\rs(w)$. We will let $\phi$ act trivially on the plateaus of $w$. 

If this is not the case, then there is a dale of height $a$ or $a-1$ in $w$. 
By Lemma~\ref{14/2/3_avoid} again, both $a$ and $a-1$ can not be dale heights.  So suppose $a-1$ is a dale height.
It follows that the occurances of $a$ and $a-1$ in $w$ are adjacent and we have 
$$
w=w_1 (a-1)^{l_0}a^{j_1}(a-1)^{l_1}\dots a^{j_t}(a-1)^{l_t} w_2, 
$$
with $l_0,\dots,l_{t-1}>0$, $l_t\geq 0$, and $j_1,\dots,j_t>0$. Further, we have $a_j<a-1$ for all $a_j$ in $w_1$, and $a_k>a$ for all $a_k$ in $w_2$. Such a string will be called a {\em dale section} of $w$. Breaking up $w$ in this manner shows that such a dale section contributes $l_1+\dots+l_t$ to $\lb(w)$, and either $j_1+\dots+j_{t-1}$ or $j_1+\dots+j_{t}$ to $\rs(w)$, depending on whether or not $l_t=0$. As such, if 
$$
d=(a-1)^{l_0}a^{j_1}(a-1)^{l_1}\dots a^{j_t}(a-1)^{l_t}
$$
is a dale section in $w$, we let 
$$
\phi(d)=
\begin{cases}
\begin{array}{ll}
(a-1)^{l_0}a^{l_1}(a-1)^{j_1}\dots a^{l_t}(a-1)^{j_t} &\text{ if }l_t>0, \\
(a-1)^{l_0}a^{l_1}(a-1)^{j_1}\dots a^{l_{t-1}}(a-1)^{j_{t-1}}a^{j_t} &\text{ if }l_t=0.
\end{array}
\end{cases}
$$
It follows that $\phi$ exchanges $\lb$ and $\rs$ for a dale section.

Now by the nature of $ R_n(14/2/3)$, we know that $w$ is merely a concatenation of plateaus and dale sections. Having defined $\phi$ on these parts of $w$, we define $\phi(w)$ by applying $\phi$ to the plateaus and dale sections of $w$ in a piecewise manner. It follows that $\phi$ is a bijection, since it is an involution. Finally, since $\lb(w)$ and $\rs(\phi(w))$ are sums over the dale sections of $w$ and $\phi(w)$, and since $\phi$ exchanges the two statistics on each dale section, it follows that we have $\lb(w)=\rs(\phi(w))$. 
\eprf

\bco
\label{14/2/3cor}
For $t\geq2$, we have
$$
\LB_n(14/2/3,1/2/\dots/t)=\sum_{i=0}^{t-2}\binom{n}{i}+ \sum_{k=1}^{n-2} \left[\sum_{m=2}^{t-1} \binom{n-1}{k+m-1}\sum_{j\geq1}\binom{k-1}{j-1}\binom{m-j}{j}\right]q^k
$$
and the equality
$$
\LB_n(14/2/3,1/2/\dots/t)=\RS_n(14/2/3,1/2/\dots/t).
$$
\eco

\bprf
Avoiding $1/2/\dots/t$ as well as $14/2/3$ adds the restriction that words must have maximum value less than or equal to $t-1$. Following the proof of Theorem~\ref{LB(14/2/3)} with this additional restriction gives the generating function $\LB_n(14/2/3,1/2/\dots/t)$. 

Next, we note that the same bijection from Corollary~\ref{LB(14/2/3)=RS(14/2/3)} also provides a bijection from $ R_n(14/2/3,1/2\dots/t)$ to itself, since $\phi$ preserves maximum values. The same map then ensures the second equality. 
\eprf

\bco
The polynomial $\LB_n(14/2/3,123)$ has degree $\lfloor n/3\rfloor$ and leading coefficient equal to
$$
\begin{cases}
\begin{array}{ll}
1& \text{ if } n=3k, \\
n& \text{ if } n=3k+1, \\
\frac{3n^2-7n+14}{6}& \text{ if } n=3k+2,
\end{array}
\end{cases}
$$
for some integer $k$. 
\eco

\bprf
Avoiding the pattern $123$ as well as $14/2/3$ adds the restriction that letters can be repeated at most twice in a word. Adapting the notation used in the proof of Corollary~\ref{LB(14/2/3)=RS(14/2/3)}, this implies that, for $w\in R_n(14/2/3,123)$, the dale sections of $w$ must have length equal to $3$ or $4$. Further, these dale sections can only contribute $1$ to $\lb(w)$. Thus to maximize $\lb(w)$, we maximize the number of dale sections contained in $w$. It follows from the restrictions on $w$ that this leads to a maximum of $\lfloor n/3\rfloor$. 

We now move to the leading coefficient. If $n=3k$ for some integer $k$, then it is clear that the only RGF $w$ in $ R_n(14/2/3,123)$ that achieves this maximum is 
$$
w=121343\dots(2k-1)2k(2k-1),
$$
giving a leading coefficient of $1$. 

Now let $w\in R_n(14/2/3,123)$ for $n=3k+1$. It follows that $w$ either has one dale section of length $4$, or one plateau of length $1$. In the first case, we note that a dale section of length $4$ has the form $a(a+1)(a+1)a$ or $a(a+1)a(a+1)$. As there will be $k$ total dales in $w$, we have $k$ choices for which dale section to extend, and $2$ choices for how to extend it. This gives $2k$ possible words of the first form. Now assume $w$ has a plateau of length $1$. Note that, once the index of this plateau has been chosen, the rest of the word is uniquely determined. As such, we can choose to place the plateau directly in front of any of the $k$ dale sections, or after the last dale section in $w$. This gives $k+1$ possible words of the second form. Summing over both possibilities now gives a leading coefficient of $n=3k+1$. 

Finally, we have $w\in R_n(14/2/3,123)$ for $n=3k+2$. There are four distinct possibilities for $w$ in this case. First, $w$ could contain one plateau of length $2$.  This gives $k+1$ possibilities as in the previous paragraph. The second possibility is that $w$ contains two plateaus of length $1$. If these plateaus are adjacent, then as in the previous case we have $k+1$ possibilities. Otherwise, we choose $2$ distinct places from these options, giving $\binom{k+1}{2}$ more words. In the third case, $w$ contains one plateau of length $1$ and one dale section of length $4$. We have $k+1$ choices for the plateau, and $2k$ possibilities for the dale section, giving $2k(k+1)$ words of this form. Finally, $w$ could contain two dale sections of length $4$. In this case, we choose two dale sections to extend. As there are two distinct ways to extend each dale section, this gives $4\binom{k}{2}$ such words. Summing over these four cases and using the substitution $n=3k+2$ gives the final result. 
\eprf

Our last corollary regarding the pattern $14/2/3$ involves multiple pattern avoidance with two partitions of $[4]$. First, we need a lemma. 

\ble
\label{RGF(14/2/3,13/2/4)}
For an RGF $w$, $w$ is contained in $ R_n(14/2/3,13/2/4)$ if and only if $w$ meets the following restrictions:
\begin{itemize}
\item For $n\geq2$ we have $a_i\geq \max\{a_1,\dots, a_{i-1}\}-1$, and
\item If $i$ is a dale of height $a$, then $a_j=a$ or $a_j=a+1$ for all $j>i$. 
\end{itemize}
\ele

\bprf
First, let $\sigma$ avoid $14/2/3$ and $13/2/4$, and let $w=w(\sigma)$. Since $ R_n(14/2/3,13/2/4)$ is a subset of $ R_n(14/2/3)$, the first inequality follows from Lemma~\ref{14/2/3_avoid}. Now assume that $i$ is a dale of height $a$ in $w$, and assume towards a contradiction that there exists $a_j$ in $w$ with $j>i$, $a_j\neq a$, and $a_j\neq a+1$. From the first inequality, it must be that $a_j>a+1$. Because $w$ is an RGF, it follows that $a(a+1)a(a+2)$ exists as a subword in $w$. But now these four elements will cause an occurance of $13/2/4$ in $\sigma$, which is a contradiction. 

For the reverse implication, let $\sigma$ be a partition with $w=w(\sigma)$ satisfying the above restrictions. From Lemma~\ref{14/2/3_avoid}, it follows that $\sigma$ will avoid $14/2/3$. To see that $\sigma$ will also avoid $13/2/4$, note that if $\sigma$ contained $13/2/4$, then the subword $abac$ would exist in $w$, with $a\neq b\neq c$. Using the first inequality, we can rule out all cases except $b<a<c$. But, as this implies a dale of height $b$ in $w$, this would lead to a contradiction with respect to the second restriction put on $w$ by the lemma. Thus $\sigma$ must also avoid $13/2/4$.
\eprf

\bco
We have 
$$
\LB_n(14/2/3,13/2/4)=2^{n-1}+\sum_{k=1}^{n-2}\left[\sum_{m\geq 2}\binom{n-1}{k+m-1}\right]q^k
$$
and 
$$
\LB_n(14/2/3,13/2/4)=\RS_n(14/2/3,13/2/4).
$$
\eco

\bprf
Following the proof of Theorem~\ref{LB(14/2/3)}, we note that the constant term in this polynomial comes from the layered partitions of $[n]$. Now consider a word $w$ in $ R_n(14/2/3,13/2/4)$ with $\lb$ equal to $k$ and maximum value $m$, for $k\geq 1$. From the previous lemma, it follows that the $k$ dales in $w$ must come to the right of the $m$ left to right maxima in $w$. As the leading one in $w$ provides the first left to right maximum, it suffices to choose $k+m-1$ other indices where we place the remaining left to right maxima in the left-most $m-1$ indices, and the $k$ dales afterwards. This gives $\binom{n-1}{k+m-1}$ such words, and summing over all possible values of $m$ gives the coefficient of $q^k$ for $k\geq1$. 

Finally, we note that the bijection from Corollary~\ref{LB(14/2/3)=RS(14/2/3)} also takes $ R_n(14/2/3,13/2/4)$ to itself. This gives the second equality. 
\eprf

For our final result, 
we provide two interesting relationships between the avoidance classes $\Pi(1/23)$ and $\Pi(12/3)$.

\bpr
\label{mpprop}
For $n\geq0$, we have the following equalities:
\begin{align*}
\LB_n(1/23)&=\RS_n(12/3),\\
\LS_n(1/23)&=\RB_n(12/3).
\end{align*}
\epr

\bprf
We will prove this theorem by providing a bijection that maps from $ R_n(1/23)$ to $ R_n(12/3)$. This bijection will interchange the $\lb$ and $\rs$ statistics, as well as the $\ls$ and $\rb$ statistics.  Let $w$ be an element of $ R_n(1/23)$. By Theorem~\ref{avoidance}, we know that $w$ is of the form $1^l23\dots m$, with possibly a single one inserted. 
Let $j$ be the number of ones in $w$, and let $i$ be the index of the rightmost one in $w$. We define $\phi: R_n(1/23)\mapsto R_n(12/3)$ as 
$$
\phi(w)=123\dots(n-j+1)(n-i+1)^{j-1}.
$$
From the characterization of $ R_n(12/3)$ provided in Theorem~\ref{avoidance}, it follows that $\phi(w)$ is indeed contained in $ R_n(12/3)$. Furthermore, by Corollary~\ref{avoidancecor} we know that $\# R_n(1/23)=\# R_n(12/3)$. It is also immediate that $\phi$ is injective, which then gives that $\phi$ is a bijection. 

Now we show that $\phi$ takes the $\lb$ statistic to the $\rs$ statistic. First, note that if $w$ is a member of $ R_n(1/23)$ with $\lb(w)=0$, then $w$ must be of the form 
$$
w=1^l23\dots (n-l+1),
$$
for some $l$ with $1\leq l\leq n$. In this case $i=j=l$. Therefore when we apply $\phi$, we are left with 
$$
\phi(w)=123\dots(n-l+1)(n-l+1)^{l-1},
$$
and it follows that $\rs(\phi(w))=0$. Now consider the case where $\lb(w)=k$, for $k>0$. In this instance, $w$ must be of the form 
$$
w=1^l23\dots(k+1)1(k+2)\dots (n-l).
$$ 
It follows that the rightmost one in $w$ has index $l+k+1$, and that there are $l+1$ ones in $w$. Thus when we apply $\phi$, we get 
$$
\phi(w)=123\dots (n-l)(n-l-k)^l,
$$
which satisfies $\rs(\phi(w))=k$. 

Finally, we show that $\phi$ takes the $\ls$ statistic to the $\rb$ statistic.
From the proof of Theorem~\ref{F1/23}, we know that if $w\in  R_n(1/23)$ with maximum value $m$, then $\ls(w)=\binom{m}{2}$. Similarly, from the proof of Theorem~\ref{F12/3}, if $w'\in R_n(12/3)$ with maximum value $m'$, then $\rb(w)=\binom{m'}{2}$. Since $\phi$ preserves maximum values, it follows that $\ls(w)=\rb(\phi(w))$.
\eprf

\section{Open problems and future research}
\label{secopfr}

We have far from exhausted the possible avenues of research concerning these statistics on avoidance classes.  Here are some open problems and indications of future avenues to pursue.

\medskip

1. {\bf Partions of larger sets.}  As we saw in Section~\ref{secmr}, there are interesting results about the pattern $\pi=14/2/3$.  It is natural to consider other partitions of $[n]$ for $n\ge4$.  For example, $\Pi_n(13/24)$ is the set of noncrossing partitions introduced by Kreweras~\cite{kre:pnc}.  We will be considering the noncrossing case in a future paper~\cite{ddggprs:rgf}.

\medskip

2. {\bf Equidistribution.}  In  their original paper, Wachs and White showed that $\lb$ and $\rs$ are equidistributed (have the same generating function) over all RGFs of length $n$ and maximum $m$.  
They also showed that the same holds for $\ls$ and $\rb$.  We have seen that these pairs of statistics are equidistributed over various avoidance classes as well in Corollaries~\ref{1/2/3cor}, \ref{1/23cor}, \ref{13/2cor}, \ref{multicor}, and~\ref{14/2/3cor}.  Is there some more general theorem about equidistribution which will have some (or even all) of these results as special cases?

\medskip

3. {\bf Mahonian pairs.}  It is well known that the permutation statistics $\inv$ and $\maj$ are equidistributed over the symmetric group $\fS_n$.  See Stanley's book~\cite{sta:ec1} for details.  
Any statistic on $\fS_n$ which has this same distribution is said to be {\em Mahonian}.
In~\cite{ss:mp}, Sagan and Savage defined a pair of subsets $(S,T)$ of $\fS_n$ to be a {\em Mahonian pair} if the distribution of $\maj$ over $S$ is the same as the distribution of $\inv$ over $T$.  They give connections of this concept with the Rogers-Ramanujan identities, the Catalan
triangle, and  the Greene-Kleitman decomposition of a Boolean algebra into symmetric chains.  Again, we have seen similar examples in Corollary~\ref{multicor} and Proposition~\ref{mpprop}.  This indicates that exploring the analogous concept for the Wachs and White statistics and avoidance classes should yield interesting results.

\medskip

4.  {\bf RGF avoidance.}  There is a second notion of avoidance for set partitions which we have not touched on in this article.  It is easiest to explain directly in terms of RGFs.  We standardize a sequence of integers by replacing all copies of the smallest element of the sequence by $1$, all copies of the next smallest by $2$, and so on.  Say that an RGF $w$ contains another one $v$ if there is a subsequence of $w$ which standardizes to $v$.  Avoidance is defined in the obvious manner.  If $\si$ avoids $\pi$ then $w(\si)$ avoids $w(\pi)$, but the converse is not always true.  We will be investigating this less restrictive notion of pattern avoidance in a forthcoming article~\cite{ddggprs:rgf}.

\nocite{*}
\bibliographystyle{alpha}

\newcommand{\etalchar}[1]{$^{#1}$}

\end{document}